\newcommand{\R}{\mathbb{R}}
\newtheorem*{theorem*}{Theorem}
\newtheorem{theorem}{Theorem}[section]
\newtheorem{proposition}[theorem]{Proposition}
\newtheorem{lemma}[theorem]{Lemma}
\newtheorem{definition}[theorem]{Definition}
\newtheorem{open}[theorem]{Open Problem}
\newtheorem{conj}[theorem]{Conjecture}
\newcommand{\genlegendre}[4]{%
	\genfrac{(}{)}{}{#1}{#3}{#4}%
	\if\relax\detokenize{#2}\relax\else_{\!#2}\fi
}
\newcommand{\legendre}[3][]{\genlegendre{}{#1}{#2}{#3}}
\definecolor{red1}{RGB}{230,25,75}
\definecolor{blue1}{RGB}{0,130,200}
\definecolor{yellow1}{RGB}{255, 225, 25}
\definecolor{green1}{RGB}{60,180,75}
\definecolor{darkgrey1}{RGB}{57, 59, 58}
\definecolor{vtx1}{RGB}{115, 110, 111}
\begin{document}

	\title{Irreducible Non-Metrizable Path Systems in Graphs}
	\author{Daniel Cizma\thanks{\raggedright Department of Mathematics, Hebrew University, Jerusalem 91904, Israel. e-mail: daniel.cizma@mail.huji.ac.il.} \and  {Nati Linial\thanks{School of Computer Science and Engineering, Hebrew University, Jerusalem 91904,
			Israel. e-mail: nati@cs.huji.ac.il~.	Supported by BSF US-Israel Grant 2018313 "Between Topology and Combinatorics". }}}

	\maketitle

\begin{abstract}
A path system $\mathcal{P}$ in a graph $G=(V,E)$ is said to be irreducible if there does not exist a partition $V= A\sqcup B$ such that $\mathcal{P}$ restricts to a path system on both $G[A]$ and $G[B]$. In this paper, we construct an infinite family of non-metrizable irreducible path systems defined on certain Paley graphs.
\end{abstract}
\section{Introduction}
A path system $\mathcal{P}$ in a graph $G= (V,E)$ is a collection of paths in $G$ such that for every $u,v \in V$ there is exactly one path in $\mathcal{P}$ connecting $u$ and $v$. We say that $\mathcal{P}$ is {\em consistent} if for every path $P$ in $\mathcal{P}$, any subpath of $P$ is also a path in $\mathcal{P}$. Every positive weight function $w: E\to \mathbb{R}^+$ gives rise to a consistent path system, by putting in $\mathcal{P}$ only $w$-shortest paths. A path systems that comes from such $w$ is said to be {\em metrizable}. We say that $G=(V,E)$ is metrizable if every consistent path system on $G$ is metrizable. The main findings so far in this newly emerging research area, can be be briefly described as follows: Metrizable graphs are very rare, yet all outerplanar graphs are metrizable. We encourage the reader to consult our paper \cite{CL} for a full account of what is currently known in this area, but note that the present paper is entirely self-contained. Some of these definitions and questions make sense also for {\em partial} path systems. Metrizability of partial path systems was investigated in \cite{Bo}.

Here we introduce the notion of an {\em irreducible} path system.
Such path systems are "atomic", in that they cannot be decomposed into two smaller consistent path systems.

\begin{definition}\label{def1}
Let $\mathcal{P}$ be a path system in a graph $G=(V,E)$. A partition $V = A\sqcup B$ with $A,B\neq\emptyset$ is called a {\em reduction} of $\mathcal{P}$ if all vertices of the path $P_{u,v}$ belong to $A$ (resp.\ $B$) whenever $u,v \in A$ (resp.\ $u, v\in B$). A path system with no reductions is said to be {\em irreducible}.
\end{definition}

We illustrate the notion of irreducibility using the Petersen graph $\Pi$ which is nonmetrizable. The diameter of $\Pi$ is $2$ and its girth is $5$. Therefore, every two vertices in $\Pi$ are connected by a single path of length $1$ or $2$. In $\mathcal{P}$, the path system that we construct, most pairs are connected by these native shortest paths. There are five exceptional pairs of nonadjacent of vertices which are connected in $\mathcal{P}$ by one of the five colored paths in \Cref{fig:nonMetPetersen}. It is easily verified that $\mathcal{P}$ is consistent, and as shown in \cite{CL}, it is non-metrizable. However, this path system admits a reduction $A,B$ where $A$ and $B$ are the set of dark and light vertices, respectively, as seen in \Cref{fig:nonMetPetersen}.
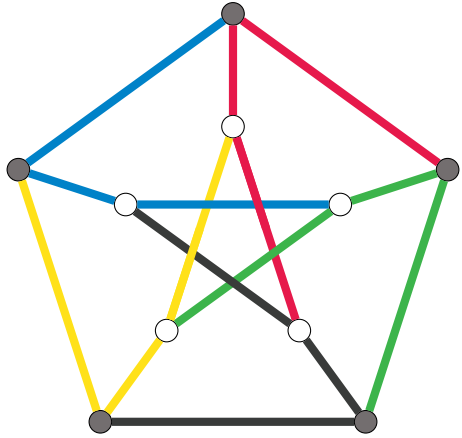
\begin{figure}[h]
\centering

	\begin{tikzpicture}[scale=0.6, every node/.style={scale=0.6}]

	\node[draw,circle,minimum size=.5cm,inner sep=1pt,fill = vtx1] (1) at (-0*360/5 +90: 5cm) [scale=1] {};
	\node[draw,circle,minimum size=.5cm,inner sep=1pt,fill = vtx1] (2) at (-1*360/5 +90: 5cm) [scale=1]{};
	\node[draw,circle,minimum size=.5cm,inner sep=1pt,fill = vtx1] (3) at (-2*360/5 +90: 5cm)[scale=1] {};
	\node[draw,circle,minimum size=.5cm,inner sep=1pt,fill = vtx1] (4) at (-3*360/5 +90: 5cm) [scale=1]{};
	\node[draw,circle,minimum size=.5cm,inner sep=1pt,fill = vtx1] (5) at (-4*360/5 +90: 5cm)[scale=1] {};

	\node[draw,circle,minimum size=.5cm,inner sep=1pt] (6) at (-5*360/5 +90: 2.5cm) [scale=1]{};
	\node[draw,circle,minimum size=.5cm,inner sep=1pt] (7) at (-6*360/5 +90: 2.5cm) [scale=1]{};
	\node[draw,circle,minimum size=.5cm,inner sep=1pt] (8) at (-7*360/5 +90: 2.5cm) [scale=1]{};
	\node[draw,circle,minimum size=.5cm,inner sep=1pt] (9) at (-8*360/5 +90: 2.5cm) [scale=1]{};
	\node[draw,circle,minimum size=.5cm,inner sep=1pt] (10) at (-9*360/5 +90: 2.5cm)[scale=1] {};

	\coordinate (c1) at ($(7)!0.5!(10)$);
	\coordinate (c2) at ($(6)!0.5!(8)$);
	\coordinate (c3) at ($(7)!0.5!(9)$);
	\coordinate (c4) at ($(8)!0.5!(10)$);
	\coordinate (c5) at ($(6)!0.5!(9)$);

	\draw [line width=3.1pt,-,red1] (1) -- (2);
	\draw [line width=3.1pt,-,blue1] (1) -- (5);
	\draw [line width=3.1pt,-,red1] (1) -- (6);
	\draw [line width=3.1pt,-,green1] (2) -- (3);
	\draw [line width=3.1pt,-,green1] (2) -- (7);
	\draw [line width=3.1pt,-,darkgrey1] (3) -- (4);
	\draw [line width=3.1pt,-,darkgrey1] (3) -- (8);
	\draw [line width=3.1pt,-,yellow1] (4) -- (5);
	\draw [line width=3.1pt,-,yellow1] (4) -- (9);
	\draw [line width=3.1pt,-,blue1] (5) -- (10);
	\draw [line width=3.1pt,-,red1] (6) -- (8);
	\draw [line width=3.1pt,-,yellow1] (6) -- (9);
	\draw [line width=3.1pt,-,green1] (7) -- (9);
	\draw [line width=3.1pt,-,blue1] (7) -- (10);
	\draw [line width=3.1pt,-,darkgrey1] (8) -- (10);

	\draw [line width=3.1pt,-,red1] (c2) -- (6);
	\draw [line width=3.1pt,-,yellow1] (c5) -- (9);

	\end{tikzpicture}
	\caption{A reducible non-metrizable path system in the Petersen Graph}
	\label{fig:nonMetPetersen}

	\end{figure}

The main result of the present paper is a construction of an infinite family of non-metrizable irreducible path systems. These systems are defined on certain Paley graphs $G_p$, where $p \equiv 1\pmod 4$ is a prime integer. The vertex set of $G_p$ is the finite field $\mathbb{F}_p$, and $a,b\in \mathbb{F}_p$ are neighbors iff $a-b$ is a quadratic residue in $\mathbb{F}_p$. We denote by $R$ and $N$ the sets of quadratic residues, resp.\ non-residues in $\mathbb{F}_p$.
\\
In order for our construction to work, we need to assume that $2, 3\in N$ are both quadratic non-residues in $\mathbb{F}_p$. Using quadratic reciprocity it is easy to see that if $p\equiv 5 \pmod{ 24}$, then $-1\in R$ and $2,3 \in N$. By Dirichlet's theorem on primes in arithmetic progressions, there are exists infinitely many primes $p\equiv 5 \pmod{ 24}$.

We define the following path system $\mathcal{P}_p$ on $G_p$: Let $a,b \in \mathbb{F}_p$

\begin{itemize}
	\item If $b-a\in R$ is a quadratic residue then $P_{a,b} \coloneqq (a,b)$
	\item If $b-a = 3$ then $P_{a,b} \coloneqq (a,a+1, a+2,b)$
	\item If $b-a\in N$ is a quadratic non-residue and $b-a\neq \pm 3$ then $P_{a,b} \coloneqq (a,\frac{b+a}{2},b)$
\end{itemize}
We prove:
\begin{theorem}\label{thm:main}
The path system  $\mathcal{P}_p$ is irreducible and non-metrizable for all primes $p>5$ for which $-1\in R$ is a quadratic residue and $2,3\in N$ are quadratic non-residues.
\end{theorem}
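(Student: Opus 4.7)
The plan is to prove three things: consistency of $\mathcal{P}_p$, non-metrizability, and irreducibility. Consistency follows directly from the definitions: a length-$2$ subpath of $(a, a+1, a+2, a+3)$ has endpoints at difference $2 \in N \setminus \{\pm 3\}$ (since $p > 5$), so the midpoint rule makes it agree with $\mathcal{P}_p$; a length-$1$ subpath of $(a, (a+b)/2, b)$ has difference $(b-a)/2 \in N \cdot N = R$ (since $2 \in N$), hence is an edge.

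For non-metrizability, assume some $w : E \to \mathbb{R}^+$ metrizes $\mathcal{P}_p$ and derive a contradiction. Since $\mathcal{P}_p$ is invariant under the translations $x \mapsto x + t$, averaging $w$ over all $t \in \mathbb{F}_p$ produces a translation-invariant metrizing weight, so I may write $w(a,b) = w_{b-a}$ with $w_d = w_{-d} > 0$ for $d \in R$. Metrizability then amounts to a family of strict inequalities on the $w_d$; in particular $P_{0,3} = (0,1,2,3)$ against its length-$2$ alternative $(0, 3/2, 3)$ (valid since $3/2 \in R$) yields $3 w_1 < 2 w_{3/2}$, each $P_{0,d} = (0, d/2, d)$ with $d \in N \setminus \{\pm 3\}$ yields $2 w_{d/2} < w_c + w_{d-c}$ for every $c \in R \cap (R+d) \setminus \{d/2\}$, and edges give analogous triangle inequalities. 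My strategy is to find a collection of $\mathcal{P}_p$-paths $(P_i)$ and alternate paths $(Q_i)$ with the same respective endpoints (at least one $Q_j \ne P_j$) whose edge-multisets satisfy $\sum_i \mathbf{1}_{Q_i} \le \sum_i \mathbf{1}_{P_i}$; summing the strict per-pair inequalities then gives $\sum_i w(Q_i) > \sum_i w(P_i) \ge \sum_i w(Q_i)$, a contradiction. The main technical difficulty is constructing such a configuration, and my plan is to exploit the anomalous length-$3$ path $P_{0,3}$ together with the edges $(0, 3/2)$ and $(3/2, 3)$ and a few midpoint paths, arranged so that the length-$3$ \emph{defect} gets cancelled on the alternate side.

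For irreducibility, suppose $V = A \sqcup B$ is a reduction. Inspection of the three cases defining $\mathcal{P}_p$ shows that each part is closed under: (R1) if $u, v$ share a part and $v - u \in N \setminus \{\pm 3\}$, then so does $(u+v)/2$; and (R2) if $u, u+3$ share a part, then so do $u+1, u+2$. By translation-invariance of $\mathcal{P}_p$ I may assume $0 \in A$. An immediate consequence of (R1) applied to $B$ is that for each $m \in A$ and each $t \in R \setminus \{\pm 3/2\}$, at least one of $m \pm t$ lies in $A$ (otherwise the midpoint $m$ would be forced into $B$), yielding $|A|, |B| \ge (p-5)/4$. The main obstacle is to upgrade this to $A = \mathbb{F}_p$; my plan is to cascade (R1) and (R2) starting from $0 \in A$ together with any second element of $A$, using (R1) to populate $A$ densely enough that a pair at difference $3$ must arise (by a pigeonhole/sumset argument), then using (R2) to fill in the intermediate integers, which serve as fresh seeds for more (R1)-applications; one iterates until $A = \mathbb{F}_p$, contradicting $B \ne \emptyset$. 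Rule (R2) is essential here because it is the only closure rule sensitive to the specific difference $3$: it breaks the multiplicative quadratic-residue symmetries of the Paley graph that would otherwise permit non-trivial closed configurations under (R1) alone.
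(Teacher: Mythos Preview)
Your proposal is a sketch rather than a proof, and both of its nontrivial parts contain genuine gaps.

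For non-metrizability, you correctly reduce to a translation-invariant weight $w_d$ and identify the key anomalous inequality $3w_1 \le 2w_{3/2}$ (note: metrizability gives $\le$, not $<$; you will need the positivity $w_d>0$ to close the argument). But your plan to find a finite collection of chosen paths $P_i$ and alternates $Q_i$ whose edge multisets nest is never carried out, and there is no reason to expect a single fixed certificate to work uniformly across all primes in the family: which $b,c\in R$ satisfy $b+c=3$ or $b+c=2a$ depends on $p$. The paper proceeds differently. It encodes metrizability as a linear system of the form $2x_a \le x_b + x_c$ (for $2a=b+c$, $a,b,c\in R$) together with the single exceptional inequality $3x_1 \le x_b + x_c$, observes that any feasible solution of a ``strongly connected'' system of such inequalities must be constant, and then proves strong connectivity using a Weil/Burgess character-sum estimate to count suitable $b,c$. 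The contradiction then comes from $3x_1 \le 2x_1$ against $x_1>0$. This is the missing idea in your approach: you need either an explicit infeasibility certificate that provably exists for every relevant $p$, or a structural argument (like strong connectivity) that forces all $w_d$ equal.

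For irreducibility, your closure rules (R1), (R2) and the lower bound on $|A|$ are fine, but the ``cascade until $A=\mathbb{F}_p$'' step is just a hope: you do not explain why iterating (R1) and (R2) from two seeds must eventually produce a pair at difference $3$, nor why the process terminates at all of $\mathbb{F}_p$ rather than stabilizing on a proper subset. The paper's argument is again quite different and more direct: it looks at the Hamiltonian cycle $(0,1,\dots,p-1,0)$, splits it into arcs according to $A\sqcup B$, and analyzes the shortest arc length $\ell$. For $\ell\le 1$ the path $P_{-1,2}=(-1,0,1,2)$ immediately contradicts the partition; for $2\le \ell\le p/4$ one shows the interval $[\ell/2+1,\,3\ell/2+1]$ would have to consist entirely of non-residues, which is impossible since it contains two elements in ratio $2{:}1$; for $\ell>p/4$ there are only two arcs and a midpoint path of the form $P_{-a,a+1}$ with $2a+1\in N$ crosses between them. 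Your cascade plan might be made to work, but as written it lacks the mechanism that actually forces the contradiction.
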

In fact, the proof that we present here shows that $\mathcal{P}_p$ is non-metrizable for $p\geq 1711$. That the same holds also when $5<p <1711$ can be directly verified by a computer.
\section{Proof of \Cref{thm:main}}
Let us start with the basic relevant concepts. A {\em path system} $\mathcal P$ in $G=(V,E)$ is a collection of simple paths in $G$ such that for every $u,v\in V$  there is exactly one member $P_{u,v}\in\mathcal P$ that connects between $u$ and $v$. We say that $\mathcal P$ is {\em consistent} if for every $P\in\mathcal{P}$ and two vertices $x,y$ in $P$, the $xy$ subpath of $P$ coincides with $P_{x,y}$.
Unless otherwise stated, every path system that we encounter here is consistent. A path system $\mathcal P$ is said to be metrizable if there exists some $w:E(G) \to (0,\infty)$ such that for each $u,v \in V$,  $w(P_{u,v}) \leq w(Q)$ for every $uv$ path $Q$.

We first prove the path system $\mathcal{P}_p$ is irreducible.
Notice that $\mathcal{P}_p$ is cyclically symmetric. Namely, if $(a_1,\dots, a_r)$ is a path in $\mathcal{P}_p$ then so is $(a_1 + x,\dots, a_r+x)$, for any $x\in \mathbb{F}_p$.

\begin{proposition} \label{prop:irreducible}
For $p\geq 30$, the path system $\mathcal{P}_p$ is irreducible.
\end{proposition}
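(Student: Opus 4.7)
The plan is to argue by contradiction: suppose $V = A \sqcup B$ is a nontrivial reduction of $\mathcal{P}_p$. By the translation symmetry of $\mathcal{P}_p$, we may assume $0 \in A$. The three types of paths in $\mathcal{P}_p$ immediately yield two closure rules that hold on each of $A$ and $B$:
\begin{itemize}
\item[(R1)] if $a, a+3$ lie on the same side, so do $a+1, a+2$;
\item[(R2)] if $a, a+d$ lie on the same side with $d \in N\setminus\{\pm 3\}$, so does $a + d/2$.
\end{itemize}

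We first use these to rule out short runs: every maximal block of same-side consecutive elements in $\mathbb{F}_p$ has length at least $3$. An isolated $i \in A$ with $i \pm 1 \in B$ fails because $i+2 \in A$ would force $i+1 \in A$ via (R2) with $d=2$, while $i+2 \in B$ would force $i \in B$ via (R1) applied to $i-1, i+2 \in B$. A length-two run is killed directly by (R1) on its two flanking $B$-elements. After a further translation, we may take $\{0,1,2\} \subseteq A$ and $\{-1,-2,-3\} \subseteq B$.

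The main step is to prove by induction on $k$ that $\{0, 1, \dots, k\} \subseteq A$, eventually giving $A = \mathbb{F}_p$ and the desired contradiction. Assume $\{0, \dots, k-1\} \subseteq A$ but $k \in B$; the run bound then forces $\{k, k+1, k+2\} \subseteq B$. Our objective is to exhibit a pair $(b_1, b_2) \in \{-3,-2,-1\} \times \{k, k+1, k+2\}$ with $b_2 - b_1 \in N \setminus \{\pm 3\}$ and integer midpoint $(b_1 + b_2)/2 \in \{0, \dots, k-1\}$, contradicting closure (R2) of $B$. The five candidate differences lie in $\{k+1, \dots, k+5\}$, and since $2, 3 \in N$ many small even integers such as $8 = 2\cdot 4$, $12 = 3\cdot 4$, $18 = 2\cdot 9$, $20 = 4 \cdot 5$ automatically lie in $N$, supplying a healthy stock of admissible differences.

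The main obstacle is producing a violating pair for every $k$ uniformly in $p \geq 30$. Parity of $b_1 + b_2$ selects four or five of the nine candidate pairs whose midpoint is a small integer lying in $[0, k-1]$ rather than a wrap-around modulo $p$, so the problem reduces to ensuring at least one of the corresponding two or three candidate differences lies in $N \setminus \{\pm 3\}$. For the exceptional values of $k$ where this naive search fails (which can only happen when a short consecutive window falls entirely in $R \cup \{\pm 3\}$), we expect to enlarge the pool either by using the possibly longer $B$-runs established in prior induction steps, or by applying (R2) to a pair internal to $A$ whose midpoint is forced into $\{k, k+1, k+2\}$; the hypothesis $p \geq 30$ ensures enough multiplicative room to carry out these auxiliary arguments.
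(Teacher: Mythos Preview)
Your closure rules (R1), (R2) and the elimination of runs of length $\le 2$ are correct, and the normalization $\{0,1,2\}\subseteq A$, $\{-1,-2,-3\}\subseteq B$ is fine. The gap is in the inductive step, and you have essentially flagged it yourself without closing it. After the parity restriction, the usable differences $b_2-b_1$ are only $\{k+2,k+4\}$ when $k$ is even and $\{k+1,k+3,k+5\}$ when $k$ is odd. Nothing forces one of these to be a non-residue: since $2\in N$, having $k+2,k+4\in R$ is equivalent to $(k+2)/2$ and $(k+2)/2+1$ both lying in $N$, i.e.\ two consecutive non-residues, which certainly occurs; for odd $k$ you would need three consecutive non-residues, which also occurs. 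So the ``naive search'' fails for infinitely many $k$, and your two proposed remedies are not carried out. The second one in particular does not work as stated: if $a_1,a_2\in\{0,\dots,k-1\}$ have $a_1+a_2$ even, their $\mathbb F_p$-midpoint lies strictly between them and hence again in $A$; if $a_1+a_2$ is odd, the midpoint is $(a_1+a_2+p)/2$, which lands in the unknown upper half of $\mathbb F_p$, not in $\{k,k+1,k+2\}$ for $k<p/2$. So you never force a midpoint of an $A$-pair into the known $B$-block this way.

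The paper avoids this by organizing the argument around the \emph{shortest} segment rather than a single boundary. Centering the minimal $A$-segment (of length $l$) at $0$, minimality guarantees that both flanking $B$-segments have length $\ge l$, so for every $b\in[\,l/2+1,\,3l/2+1\,]$ both $b$ and $-b$ lie in $B$. If any such $b$ were a residue, then $2b\in N$ and $P_{-b,b}=(-b,0,b)$ would force $0\in B$. Thus the whole interval $[\,l/2+1,\,3l/2+1\,]$ consists of non-residues; but it contains both $l/2+1$ and $l+2=2(l/2+1)$, and since $2\in N$ exactly one of these is a residue --- contradiction. The crucial point is that minimality produces a window of forced non-residues whose length is comparable to its starting point, so the ratio-$2$ trick applies. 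Your induction only ever controls a window of width five, independent of $k$, which is exactly why it cannot close.
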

\begin{proof}
Suppose toward contradiction that $V = A\sqcup B$ is a reduction of $\mathcal{P}_p$. Consider the Hamiltonian cycle $C = (0,1,2,3,\dots, p-1,0 )$ in $G_p$. This reduction splits this cycle into $2k$ segments $C = \bigcup_1^{2k} R_i$,
where $V(R_{2i-1}) \subset A$ and $V(R_{2i}) \subset B$ for all $1\leq i \leq k$. Let $l$ be the smallest length of these segments, where the length of a segment is the number of its edges. Due to $\mathcal{P}_p$'s rotational symmetry, we can and will assume that the shortest segment is $R_1$, and $R_{1} = ( -  \frac{l}{2} , \dots, -1, 0,1,\dots, \frac{l}{2}  )$ for even $l$ and $R_{1} = ( - \lfloor \frac{l}{2} \rfloor, - \lfloor \frac{l}{2} \rfloor + 1,\dots, -1, 0,1,\dots, \lfloor \frac{l}{2} \rfloor, \lfloor \frac{l}{2} \rfloor + 1 )$ for odd $l$.

If $l=1$, then $R_1 = (0, 1)$ and $-1\in R_{2k}$~, $2 \in R_{2}$. By assumption $V(P_{-1,2})\subset B$, since both $V(R_{2k}), V(R_2)\subset B$. However, $P_{-1,2} = (-1,0,1,2)$, whereas $V(R_1) = \{0, 1\}\subset A$. The same argument works as well when $l=0$.

We next consider the case where $l$ is even and $2\leq l \leq \frac{p}{4}$. Again, by symmetry we may assume that $R_{1} = ( - \frac{l}{2} ,   \dots, -1, 0,1,\dots, \frac{l}{2} )$. Since $l$ is minimal, $|R_2|\geq l$ and $|R_{2k}|\geq l$, and hence $(\frac{l}{2} + 1, \frac{l}{2} + 2\dots,\frac{3l}{2} + 1 ) \subseteq R_{2}$ and $(-\frac{l}{2}- 1,- \frac{l}{2}- 2\dots,-\frac{3l}{2} - 1 ) \subseteq R_{2k}$. We claim next that the range $[\frac{l}{2}+1, \frac{3l}{2} + 1]$ is comprised only of non-residues. For if $b\in [\frac{l}{2}+1, \frac{3l}{2} + 1]$, is a quadratic residue, then by construction $P_{-b,b} = (-b,0,b)$. This is a contradiction, since $-b\in V(R_{2k})\subset B, b\in V(R_1)\subset B$, $0\in V(R_0)\subset A$. On the other hand, the interval $[\frac{l}{2}+1, \frac{3l}{2} + 1]$ contains both $\frac{l}{2}+1$ and $l+2$, where one is a quadratic residue and the other is not, since their ratio is $2:1$, and by assumption $2\in N$. When $l$ is odd an identical argument works for the same range.

There remains the range $l >\frac{p}{4}$. Again, we assume that $l$ is even since the proof when $l$ is odd is essentially identical. As $l$ is the length of the smallest segment and $C$ contains an even number of segments, it necessarily follows that $C$ splits into
exactly two segments $C=R_1\sqcup R_2$, where  $R_{1} = ( - \frac{l}{2}, \dots, -1, 0,1,\dots, \frac{l}{2} )$, $l \leq \frac{p}{2}$.

If $P_{-a,a+1}=(-a,\frac{p+1}{2},a+1)$ for some small $a>0$, we again encounter a contradiction, since this path starts and ends in $V(R_1)=A$ and its middle vertex is in $V(R_2)=B$. The choice $a=1$ won't do, since $P_{-1,2}=(-1,0,1,2)$. Also, for $P_{-a,a+1}=(-a,\frac{p+1}{2},a+1)$ to hold, $2a+1$ must be a non-square, for otherwise $P_{-a,a+1}=(-a,a+1)$. Thus, if $5\in N$, we can use $a=2$. If not, and $5\in R$, then $15\in N$ because we are assuming $3\in N$. We can, therefore, take $a=7$, which is "small enough" under the assumption $p\geq 30$.

Let $L_p$ be the maximum length of a consecutive segment of non-residues in $\mathbb{F}_p$. We note that
upper bounds on $L_P$ can be used to derive shorter proofs of \Cref{prop:irreducible}. E.g., Hummel \cite{Hu} showed that $L_p\le\sqrt p$ for every prime $p\neq 13$, and Burgess \cite{B2} proved that $L_p\le O(p^{1/4}\log p)$.
\end{proof}

It remains to show that:
\begin{proposition}\label{prop:PaleyNonMet}
The path system  $\mathcal{P}_p$ is not metrizable.
\end{proposition}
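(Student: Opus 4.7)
The plan is by contradiction: assume that $\mathcal{P}_p$ admits a positive metrizing weight function $w$, and derive inconsistent inequalities. First I would reduce to translation-invariant weights. Since the cyclic shift $x\mapsto x+1$ is an automorphism of $(G_p,\mathcal{P}_p)$, and metrizability is preserved under convex combinations of metrizing weights, averaging $w$ over all $p$ translates yields a metrizing weight depending only on differences. Writing this averaged weight as an even function $w:R\to(0,\infty)$ and letting $f(d)$ denote the shortest-path distance from $0$ to $d$ in the weighted Cayley graph $\mathrm{Cay}(\mathbb{F}_p,R)$, metrizability takes the form
\[
f(d)=\begin{cases} w(d), & d\in R,\\ 3w(1), & d=\pm 3,\\ 2w(d/2), & d\in N\setminus\{\pm 3\},\end{cases}
\]
together with the subadditive triangle inequality $f(a+b)\leq f(a)+f(b)$.

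Next, I would extract two base bounds from the length-$3$ anomaly by writing $3=\tfrac32+\tfrac32$ and $3=4+(-1)$ in the triangle inequality, obtaining $w(3/2)\geq\tfrac32w(1)$ and $w(4)\geq 2w(1)$. These then propagate to further residues. For example, $f(8)\leq f(9)+f(-1)$ reads $2w(4)\leq w(9)+w(1)$ and yields $w(9)\geq 3w(1)$; iterating subadditivity on other non-residue pairs $(0,d)$ with $d\in N\setminus\{\pm3\}$ produces further lower bounds $w(r)\geq\alpha_r w(1)$ along a growing sequence of residues.

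I would then produce a matching upper bound from an independent chain of subadditive inequalities. A clean instance occurs when $5\in R$ and $11\in N$: the triangle inequalities $f(11)\leq f(5)+f(6)$, $f(10)\leq f(1)+f(9)$, and $f(6)\leq f(1)+f(5)$ combine to give $2w(9)\leq w(5)+w(6)\leq 2w(1)+w(9)$, hence $w(9)\leq 2w(1)$, contradicting $w(9)\geq 3w(1)$. For general $p$ the specific small integers used in the chain may have other Legendre symbols, so I would exhibit an analogous closing chain built from integers chosen to match the residue pattern of $p$.

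The main obstacle will be guaranteeing that for every $p\geq 1711$ satisfying the hypotheses some such chain closes. Existence of the chain reduces to finding residues $r\in\mathbb{F}_p$ with prescribed Legendre symbols $\chi(r-a_i)$ for some fixed small $a_1,\dots,a_k\in\mathbb{Z}$; by Weil's character-sum bound the number of such $r$ equals $p/2^k+O_k(\sqrt p)$, which is positive once $p$ is large enough that the main term beats the error. The numerical threshold $1711$ then comes from making this quantitative, and primes $5<p<1711$ satisfying the hypotheses are handled by the direct computer verification noted in the statement of \Cref{thm:main}.
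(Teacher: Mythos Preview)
Your opening move (averaging over translates to reduce to a weight $w$ depending only on $|x-y|$) and your closing tool (Weil-type character-sum bounds to guarantee the needed residue configurations for large $p$) both match the paper. The gap is in the middle, the actual infeasibility argument.

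First, the worked example is wrong. Under your hypotheses $5\in R$, $11\in N$ (and $11\neq\pm3$ for the relevant $p$), the chosen path from $0$ to $11$ is $(0,\tfrac{11}{2},11)$, so $f(11)=2w(11/2)$; the three triangle inequalities you list therefore do not yield $2w(9)\le w(5)+w(6)$. They do yield the other half, $w(5)+w(6)\le 2w(5)+w(1)\le 2w(1)+w(9)$, but that by itself is harmless.

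Second, and more importantly, the general strategy of pushing lower bounds $w(r)\ge\alpha_r\,w(1)$ outward and then locating a contradicting upper bound $w(r)\le\beta_r\,w(1)$ does not obviously close. Every metrizability constraint here has the shape $2w(a)\le w(b)+w(c)$ (with the single anomalous $3w(1)\le w(b)+w(c)$). Such an inequality bounds $w(a)$ above only in terms of \emph{other} $w$-values, never in terms of $w(1)$ alone; nothing in your outline produces an absolute bound $w(9)\le C\,w(1)$. Saying you will ``exhibit an analogous closing chain'' for general $p$ is precisely the hard part, and no template is supplied.

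The paper closes the argument with a structural lemma you are missing. It encodes each constraint $2x_a\le x_b+x_c$ (for $a,b,c\in R'$ with $2a=b+c\neq 3$) as directed edges $a\to b$ and $a\to c$ in a digraph $\vec D$ on $R'$, and observes: if $\vec D$ is strongly connected, then any feasible solution must have all $x_a$ equal, because the maximum value propagates along every out-edge. Since one constraint carries left coefficient $3$ rather than $2$, equal values are impossible, and the system is infeasible. The character-sum estimate is then deployed not to find an ad~hoc chain but to prove strong connectivity: for any $a,b\in R'$ one shows $\bigl|(\Gamma(2a)\cap R)\setminus\Gamma(b/2)\bigr|\ge p/8 - O(\sqrt p)$, which furnishes $\beta\in R'$ with $2a-\beta,\,2\beta-b\in R'$ and hence a two-step path $a\to\beta\to b$ in $\vec D$. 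That propagation-of-the-maximum idea is what replaces your unproven ``closing chain''.
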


Let $\varphi_x$ be the rotation-by-$x$ map
$$\varphi_x(a_1,\dots, a_r) = (a_1 + x, \dots, a_r + x).$$
Again we use the cyclic symmetry, i.e., invariance under $\varphi_x$ of $G_p$ and $\mathcal{P}_p$.

Had $\mathcal{P}$ been metrizable, there would exist a weight function $w:E\to (0,\infty)$ such that
\begin{equation}\label{metrizability}
w(P_{u,v} ) \leq w(Q)\text{~for every~}u,v \in V \text{~and every~} uv \text{~path~} Q.
\end{equation}

Due to cyclic symmetry, if $w$ satisfies (\ref{metrizability}), so does $w\circ \varphi_x$. Moreover, the set of all $w$ that satisfy (\ref{metrizability}) is a convex cone. Therefore, if $w$ satisfies (\ref{metrizability}), then so does the weight function
$$\tilde{w} = \sum_{x\in \mathbb{F}_p} w\circ \varphi_x.$$
Note also that $\tilde{w}(x,y)$ depends only on $|x-y|$.  \\

We associate a formal variable $x_a$ with every quadratic residue $a\in R$. By the above discussion, if $\mathcal{P}_p$ is metrizable then the following system of linear equations and inequalities is feasible:

\begin{alignat}{3}\label{eq:LP1}
	2x_{a}& \leq x_{b} + x_{c}\qquad&& a,b,c\in R,~~ 2a = b  + c \neq 3 \nonumber \\
	3x_{1}& \leq x_{b} + x_{c}\qquad&& a,b,c\in R,~~ 3 = b  + c \tag{$\ast$} \\
	x_{a} & = x_{-a}\qquad&& a\in R \nonumber\\
	x_a & > 0    \qquad && a \in R  \nonumber
\end{alignat}
Therefore, to show that $\mathcal{P}_p$ is non-metrizable it suffices to show that (\ref{eq:LP1}) is infeasible.\\

We consider instead a more general setup and ask if the following system of linear inequalities is feasible. It has $M+N$ inequalities in $x_1,\dots, x_N $, and it is given that $a_1,\ldots,a_M\ge 2$.
\begin{alignat}{3}\label{eq:LP2}
	a_{m} x_{i_m} & \leq x_{j_m} + x_{k_m} \qquad&& m = 1,\dots, M  \tag{$\ast\ast$} \\
	x_i & > 0    \qquad &&i =1,2,\dots, N. \nonumber
\end{alignat}
We associate a digraph $\Vec{D}$ with this system of linear inequalities. Its vertex set is $V(\Vec{D})=1,2,\dots, N$. The edge set is defined as follows: Any inequality $a_{m} x_{\alpha}  \leq x_{\beta} + x_{\gamma}$ that appears in the system gives rise to an edge from $\alpha$ to $\beta$ and one from $\alpha$ to $\gamma$. We say the system of inequalities is {\em strongly connected} if $\Vec{D}$ is strongly connected. We observe:
\begin{lemma} \label{lem:strongDirLP}
If \eqref{eq:LP2} is a strongly connected system, then it is feasible if and only if $a_1 = a_2 = \cdots = a_M = 2$. In this case $x_1 = x_2 = \cdots = x_N$ is the only feasible solution.
\end{lemma}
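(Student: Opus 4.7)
The plan is an extremal argument applied to the maximum coordinate of any purported feasible solution. The edge case $N=1$ is handled immediately, since the only inequality has the form $a_1 x_1 \leq 2 x_1$, which forces $a_1 = 2$, and any positive $x_1$ then satisfies it. So assume $N \geq 2$, and suppose $(x_1,\dots,x_N)$ is a feasible solution to (\ref{eq:LP2}). Set $M := \max_i x_i > 0$ and $S := \{i : x_i = M\}$, which is nonempty.

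The key step is to show that $S$ is closed under the out-neighbor relation of $\vec{D}$. Fix $i \in S$. Because $\vec{D}$ is strongly connected on at least two vertices, $i$ must have at least one outgoing edge, so $i = i_m$ for some inequality $m$. We then have
\[
a_m M \;=\; a_m x_{i_m} \;\leq\; x_{j_m} + x_{k_m} \;\leq\; 2M,
\]
which forces $a_m \leq 2$; combined with the hypothesis $a_m \geq 2$ this yields $a_m = 2$, and equality must hold throughout the chain, so $x_{j_m} = x_{k_m} = M$. Hence both out-neighbors of $i$ lie in $S$. Applying this to every inequality whose source is in $S$ shows that $S$ is out-closed.

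A nonempty out-closed subset of a strongly connected digraph must be the whole vertex set, so $S = V$, and all $x_i$ equal $M$. Substituting this constant assignment into any constraint gives $a_m M \leq 2M$, so $a_m = 2$ for every $m$. This simultaneously yields the two implications we need: feasibility forces $a_1 = \cdots = a_M = 2$, and every feasible solution is a positive constant vector. Conversely, if every $a_m = 2$, then $x_1 = \cdots = x_N = 1$ trivially satisfies (\ref{eq:LP2}) with equality. The only delicate point is the extremal step showing that $S$ is out-closed; once this is in hand, strong connectivity takes care of the rest.
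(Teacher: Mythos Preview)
Your proof is correct and follows essentially the same extremal argument as the paper: pick a coordinate achieving the maximum, observe that any inequality with that index on the left forces equality throughout and hence propagates the maximum to the right-hand indices, then use strong connectivity to conclude all coordinates coincide. One cosmetic remark: you reuse the symbol $M$ for $\max_i x_i$, clashing with the paper's $M$ for the number of inequalities; rename it to avoid confusion.
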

\begin{proof}
Clearly, $x_1 = x_2 = \cdots = x_N$ is a feasible solution when $a_1 = a_2 = \cdots = a_M = 2$.\\
Consider a feasible solution $(x_1,\dots, x_N) \in \R^N$ and say that $x_\alpha=\max_{1\leq j \leq N} x_j$. If there is a directed edge $(\alpha,\beta) \in E(\Vec{D})$, then some inequality in the system must have the form $a x_{\alpha} \leq x_{\beta} + x_{\gamma}$. Here $a\ge 2$, and since $x_{\alpha}$ is maximal, necessarily $a = 2$ and $x_{\alpha} = x_{\beta} = x_{\gamma}$. By the same reasoning $x_{\alpha} = x_{\delta}$ whenever there is a directed path from $\alpha$ to $\delta$ in $H$. The claim follows since $\Vec{D}$ is strongly connected.
\end{proof}
We show that the following subsystem of \eqref{eq:LP1} is strongly connected, and hence
by \cref{lem:strongDirLP}, it is infeasible. Let $R' \coloneqq R\setminus \Big\{\frac{p+3}{2}, \frac{p-3}{2}\Big\}$
\begin{alignat}{4}\label{eq:LP3}
I_{a; b,c}:\qquad&&	2x_{a}& \leq x_{b} + x_{c}\qquad&& a,b,c\in R', ~~~2a = b  + c \neq 3 \nonumber \\
I_1:\qquad&&	3x_{1}& \leq x_{b} + x_{c}\qquad&& a,b,c\in  R' , ~~~3 = b  + c \tag{$\ast\ast\ast$} \\
I_0:\qquad&&		x_a & > 0    \qquad && a \in R'  \nonumber
\end{alignat}
The following theorem of Burgess \cite{B1,B2} shows that Paley graphs resemble random $G(n, 1/2)$ graphs
(see also, e.g., \cite{BEH,AC}). This theorem is stated in terms of the Legendre's Symbol. Namely, for $0\neq a\in \mathbb{F}_p$ we write $\legendre{a}{p}=\pm 1$ to indicate whether or not $a$ is a quadratic residue.

\begin{theorem}\label{thm:BurgessBound}
 	Let $a_1,\dots, a_k \in \mathbb{F}_p$ be distinct. Then
 	$$\bigg|\sum_{x=0}^{p-1} \prod_{i=1}^{k}\legendre{x-a_i}{p} \bigg| \leq (k-1)\sqrt{p}$$
\end{theorem}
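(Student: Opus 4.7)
The plan is to deduce this from Weil's theorem, i.e.\ the Riemann hypothesis for curves over finite fields, via the standard reduction to point-counting on a hyperelliptic curve. Set $f(x) = \prod_{i=1}^{k}(x-a_i) \in \mathbb{F}_p[x]$. Since the $a_i$ are distinct, $f$ is square-free of degree $k$, and in particular is not a square in $\overline{\mathbb{F}_p}[x]$. With the customary convention $\legendre{0}{p}=0$, the quantity of interest is
$$S \;:=\; \sum_{x\in\mathbb{F}_p}\legendre{f(x)}{p}.$$

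The key identity is $\#\{y\in\mathbb{F}_p:y^2=a\} = 1+\legendre{a}{p}$ for every $a\in\mathbb{F}_p$ (under the above convention). Summing over $x$ yields
$$\#\{(x,y)\in\mathbb{F}_p^2 : y^2=f(x)\} \;=\; p + S.$$
So bounding $|S|$ amounts to controlling the number of affine $\mathbb{F}_p$-points of the hyperelliptic curve $C_0:y^2=f(x)$. Its smooth projective model $C$ has genus $g = \lfloor (k-1)/2\rfloor$; passing from $C_0$ to $C$ changes the point count only by the number of points at infinity, which is at most two. Weil's theorem then gives
$$\bigl|\#C(\mathbb{F}_p) - (p+1)\bigr| \;\leq\; 2g\sqrt{p}.$$
Since $2g\leq k-1$, a short bookkeeping of the two parities of $k$ (the even case contributes an additive $+1$ that is absorbed into a factor of $\sqrt{p}$ already for $p\geq 2$) yields the claimed inequality $|S|\leq(k-1)\sqrt{p}$.

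The principal obstacle is Weil's theorem itself, which is genuinely deep; we would invoke it as a black box, just as the paper does through the citations \cite{B1,B2}. Should one insist on avoiding algebraic geometry, an alternative route is the elementary Stepanov--Schmidt--Bombieri method, which attains the same bound by purely polynomial techniques over $\mathbb{F}_p$ but is considerably more laborious to write out. Either way, for the applications to \eqref{eq:LP3} below, the statement is all that is needed.
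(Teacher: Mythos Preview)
The paper does not prove this theorem at all: it is stated with a citation to Burgess and used as a black box, so there is no in-paper argument to compare against. Your sketch via Weil's theorem is the standard and correct route. The bookkeeping is right: for $f$ squarefree of degree $k$ the smooth projective model of $y^2=f(x)$ has genus $g=\lfloor(k-1)/2\rfloor$; when $k$ is odd there is one point at infinity and one gets $|S|\le 2g\sqrt{p}=(k-1)\sqrt{p}$ on the nose, while when $k$ is even there are two (the leading coefficient is $1$), giving $|S+1|\le(k-2)\sqrt{p}$ and hence $|S|\le(k-2)\sqrt{p}+1\le(k-1)\sqrt{p}$.

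One small remark on attribution: this character-sum estimate is Weil's, not Burgess's; Burgess's contribution concerns short incomplete sums, which is a different (and harder) regime. The paper's citation is slightly loose on this point, but your identification of Weil's theorem as the underlying input is the right one.
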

We denote neighbor sets in $G_p$ by $\Gamma$.
\begin{lemma}\label{lem: commonNeighbors}
	Let $x,y,z \in G_p$ distinct vertices. Then
$$\big|(\Gamma(x) \cap \Gamma(y)) \setminus \Gamma(z)\big| = \frac{p}{8} + O(\sqrt{p})$$
in fact
$$\left|\big|(\Gamma(x) \cap \Gamma(y)) \setminus \Gamma(z)\big| - \frac{p}{8}\right| \le 5 \sqrt{p} + 1$$
\end{lemma}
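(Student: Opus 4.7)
My plan is to rewrite the quantity as a sum of Legendre-symbol products and then apply \Cref{thm:BurgessBound} term by term. Set $\chi(a) = \legendre{a}{p}$. For $w \neq x$ we have $\mathbf{1}[w\in\Gamma(x)] = \tfrac{1+\chi(w-x)}{2}$, and analogously for $y$; for $w \neq z$ we have $\mathbf{1}[w\notin\Gamma(z)] = \tfrac{1-\chi(w-z)}{2}$. Since no vertex is its own neighbor, $w=x$ and $w=y$ contribute nothing to the count; the single point $w=z$ contributes exactly $\varepsilon_z \in \{0,1\}$, the indicator that $\chi(z-x)=\chi(z-y)=1$. Therefore
\begin{equation*}
\bigl|(\Gamma(x)\cap\Gamma(y))\setminus\Gamma(z)\bigr| = \frac{1}{8}\sum_{w\in\mathbb{F}_p\setminus\{x,y,z\}} \bigl(1+\chi(w-x)\bigr)\bigl(1+\chi(w-y)\bigr)\bigl(1-\chi(w-z)\bigr) + \varepsilon_z.
\end{equation*}

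Next I expand the triple product into eight Legendre-symbol monomials. The constant monomial produces the main term $\tfrac{p-3}{8}$. The seven remaining monomials are character sums of three types: three single-factor sums $\sum\chi(w-a)$, three pair sums $\sum\chi(w-a)\chi(w-b)$, and one triple sum $\sum\chi(w-x)\chi(w-y)\chi(w-z)$. For each I first extend the summation to all of $\mathbb{F}_p$, where \Cref{thm:BurgessBound} supplies the bounds $0$, $\sqrt p$, and $2\sqrt p$ respectively, and then correct for the three excluded points $w\in\{x,y,z\}$. Each correction is a product of Legendre symbols of magnitude at most $1$, and many vanish outright because some factor equals $\chi(0)=0$.

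A brief accounting: each single-factor sum has total error at most $2$ (the term $w=a$ vanishes, the other two contribute $\pm 1$); each pair sum has total error at most $\sqrt p + 1$ (two of the three excluded terms vanish); the triple sum has total error at most $2\sqrt p$ (all three excluded terms vanish). Summing these seven contributions and dividing by $8$ bounds the deviation from $\tfrac{p-3}{8}$ by $\tfrac{5\sqrt p+9}{8}$; folding in the $\tfrac{3}{8}$ shift to $\tfrac{p}{8}$ and the $\varepsilon_z\leq 1$ correction yields an overall bound of $\tfrac{5\sqrt p}{8}+\tfrac{5}{2}$, comfortably inside the claimed $5\sqrt p+1$.

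There is no real obstacle — the argument is a textbook application of the Weil/Burgess character-sum bound. The only care required is the pedantic bookkeeping of which correction terms vanish because of a $\chi(0)$ factor. The generous slack in the stated constant suggests the authors simply opted for a clean-looking bound rather than the sharpest one this method gives.
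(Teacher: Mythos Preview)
Your proof is correct and follows essentially the same route as the paper: express the count via the indicator identity $\tfrac{1}{8}(1+\chi(w-x))(1+\chi(w-y))(1-\chi(w-z))$, expand, and bound each nonconstant term with \Cref{thm:BurgessBound}. The only difference is that you track the excluded-point corrections more carefully and obtain the sharper constant $\tfrac{5\sqrt{p}}{8}+\tfrac{5}{2}$, whereas the paper is content with the looser $5\sqrt{p}+1$.
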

\begin{proof}
Recall that $z\in \Gamma(a)$ iff $a-z\in R$. Therefore
\begin{equation*}
	\begin{split}
	\big|(\Gamma(x) \cap \Gamma(y)) \setminus \Gamma(z)\big| &= \frac{1}{8}\sum_{\substack{i=0\\
				i\neq x,y,z}}^{p-1} \bigg( 1+\legendre{x-i}{p}\bigg)\bigg( 1+\legendre{y-i}{p}\bigg)\bigg( 1-\legendre{z-i}{p}\bigg)  \\
			&\qquad \qquad \qquad + \frac{1}{4}\bigg( 1+\legendre{x-z}{p}\bigg)\bigg( 1+\legendre{y-z}{p}\bigg)\\
			 &\leq  \frac{1}{8}\sum_{i=0}^{p-1} \bigg( 1+\legendre{x-i}{p}\bigg)\bigg( 1+\legendre{y-i}{p}\bigg)\bigg( 1-\legendre{z-i}{p}\bigg)  + 1
	\end{split}
\end{equation*}
Expanding the sum we get
$$\frac{p}{8} + \sum_{i=0}^{p-1} \left[ \legendre{x-i}{p} \legendre{y-i}{p}- \legendre{x-i}{p} \legendre{z-i}{p}  - \legendre{y-i}{p} \legendre{z-i}{p} - \legendre{x-i}{p} \legendre{y-i}{p}\legendre{z-i}{p} \right].$$
The conclusion follows from \cref{thm:BurgessBound}.
\end{proof}
\begin{lemma} \label{lem:strongConn}
The system  \eqref{eq:LP3}  is strongly connected for large $p$.
\end{lemma}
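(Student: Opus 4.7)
The plan is to prove the slightly stronger statement that for all sufficiently large $p$ the directed diameter of $\vec{D}$ is at most $3$. Unpacking definitions, an edge $a \to b$ of $\vec{D}$ is present precisely when $a, b \in R'$ and $2a - b \in R'$; the side condition $2a \neq 3$ is already built into $R'$, and the extra edges supplied by $I_1$ at vertex $1$ will not be needed. Thus it suffices to show that for every pair $a, b \in R'$ there is a directed path of length at most three from $a$ to $b$.

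The first step is a count of 2-step paths. Setting $\chi \coloneqq \legendre{\cdot}{p}$ and invoking the hypotheses $\chi(-1) = 1$ and $\chi(2) = -1$, I will consider
\begin{equation*}
N(a,b) \coloneqq \bigl|\{c \in R' : 2a - c \in R', \; 2c - b \in R'\}\bigr| = \tfrac{1}{8}\sum_{c=0}^{p-1}\bigl(1 + \chi(c)\bigr)\bigl(1 + \chi(c - 2a)\bigr)\bigl(1 - \chi(c - b/2)\bigr) + O(1),
\end{equation*}
where the $O(1)$ absorbs boundary contributions from $c \in \{0, 2a, b/2\}$ and from the at most six values of $c$ making $c$, $2a-c$, or $2c-b$ land in the forbidden set $\{(p \pm 3)/2\}$. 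Expanding the product, every term with a single $\chi$ factor vanishes by $\sum_c \chi(c - \gamma) = 0$; every bilinear sum $\sum_c \chi(c - \alpha)\chi(c - \beta)$ equals $-1$ when $\alpha \neq \beta$; and the cubic sum $\sum_c \chi(c)\chi(c - 2a)\chi(c - b/2)$ is bounded by $2\sqrt p$ via \Cref{thm:BurgessBound} whenever the three roots $0$, $2a$, $b/2$ are distinct. Combining these estimates gives $N(a,b) = p/8 + O(\sqrt p) > 0$ for $p$ large enough, delivering a 2-step path in this generic case.

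The only obstruction is the degenerate configuration $b = 4a$, where $b/2 = 2a$ collapses two of the three affine forms. Here $N(a, 4a) = 0$ outright: a path $a \to c \to 4a$ would require $\chi(2a - c) = 1$ and $\chi(2c - 4a) = 1$ simultaneously, which rewrite using $\chi(-1) = 1$ and $\chi(2) = -1$ as $\chi(c - 2a) = 1$ and $\chi(c - 2a) = -1$. I cover this pair by a three-step detour. The set
\begin{equation*}
S \coloneqq \{d \in R' : d - 2a \in N, \; d \neq 4a\}
\end{equation*}
has size $\tfrac{p}{4} + O(1)$ by a single-character-sum computation, and every $d \in S$ satisfies $d \to 4a$ since $\chi(2d - 4a) = \chi(2)\chi(d - 2a) = (-1)(-1) = 1$. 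Applying the generic estimate to the pair $(a, d)$, which is non-degenerate because $d \neq 4a$, produces a $c \in R'$ with $a \to c \to d$, and composing yields the required 3-path $a \to c \to d \to 4a$.

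The main obstacle, beyond invoking \Cref{thm:BurgessBound}, is recognizing and handling the single degenerate family $b = 4a$; everything else, including the $O(1)$ boundary corrections arising from the exclusions defining $R'$, reduces to the classical identity $\sum_c \chi\bigl((c-\alpha)(c-\beta)\bigr) = -1$ for $\alpha \neq \beta$ together with a final application of \Cref{thm:BurgessBound}.
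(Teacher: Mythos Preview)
Your argument is correct and, for the generic case $b\neq 4a$, is essentially identical to the paper's: both reduce the existence of a two-step path $a\to c\to b$ to the count $|(\Gamma(2a)\cap \Gamma(0))\setminus\Gamma(b/2)|$ and estimate it as $p/8+O(\sqrt p)$ via \Cref{thm:BurgessBound}. The only cosmetic difference is that you expand the character sum directly while the paper packages the same computation as \Cref{lem: commonNeighbors}.

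Where your proof genuinely diverges is the degenerate case $b=4a$. The paper disposes of it in one line, asserting that ``there is a two step path from $a$ to $b$ via $\beta$ for any $\beta\in R'\setminus\{a,b\}$.'' Your observation that $N(a,4a)=0$ shows this assertion is false: for any $\beta\neq 2a$ one has $\chi(2a-\beta)=\chi(\beta-2a)$ while $\chi(2\beta-4a)=\chi(2)\chi(\beta-2a)=-\chi(\beta-2a)$, so the two required edges $a\to\beta$ and $\beta\to 4a$ can never coexist. Your three-step detour through some $d\in R'$ with $d-2a\in N$ (so that $d\to 4a$ holds) followed by an application of the generic two-step estimate to the nondegenerate pair $(a,d)$ is a legitimate repair of this gap. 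One small point: you should note that $d\to 4a$ also requires $2d-4a\in R'$ rather than merely $R$, which excludes at most two values of $d$; since $|S|=p/4+O(1)$ this is harmless and fits into the $O(1)$ bookkeeping you already allow elsewhere.
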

\begin{proof}
To prove this lemma we show that the following digraph $\Vec{D}$ on the vertex set $R'$ is strongly connected. Every triple $x,y,z\in R'$ with $2x = y+z\neq 3$ gives rise to the edges $(x,y), (x,z)\in E(\Vec{D})$. Also, whenever $y+z=3$ for $y\neq z$ in $R'$ we get the edges $(1,y), (1,z)\in E(\Vec{D})$.

For any two vertices $a,b\in R'$, we find a path in $\Vec{D}$ from $a$ to $b$. We first assume $4a\neq b$ and we
find three distinct elements $\alpha, \beta, \gamma \in R'$
such that $2a = \alpha + \beta$ and $2\beta = \gamma + b$. By construction, $(a,\beta),(\beta,b)\in E(\Vec{D})$ is a $2$-step path from $a$ to $b$. Let $S$ be the set of $\beta \in R$ for which there exist $\alpha, \gamma \in R$ with $2a = \alpha + \beta$ and $2\beta = \gamma + b$.
$$S = (\Gamma(2a) \cap R) \setminus \Gamma(b/2)$$
Indeed, fix $\beta \in R$. Notice that $2a = \alpha + \beta$ for some $\alpha \in R$ iff $\beta \in \Gamma(2a) $. Similarly $2\beta = \gamma + b$ for some $\gamma \in R$ iff $2\beta \in \Gamma(b) $, and since $2$ is a non-residue $2\beta \in \Gamma(b) $  iff $\beta \notin \Gamma(b/2) $. As $R = \Gamma(0)$ and $0,2a,\frac{b}{2}$ are all distinct we can use \cref{lem: commonNeighbors} and to get
$$ |S| = |(\Gamma(2a) \cap R) \setminus \Gamma(b/2)| \ge \frac{p}{8} - 5\sqrt{p} - 1.$$
But aside of the requirement that $\beta\in S$ we also insist that $\beta, 2a - \beta, 2\beta - b \notin \Big\{\frac{p+3}{2}, \frac{p-3}{2}\Big\}$. This rules out at most $6$ eligible values for $\beta$. Such a $\beta$ exists provided that $\frac{p}{8} - 5\sqrt{p} -1 > 6$. This inequality holds when $p\ge 1711$. \\
When $4a  = b$ there is a two step path from $a$ to $b$ via $\beta$ for any $\beta \in R' \setminus \{a,b\}$.
\end{proof}
It follows from \Cref{lem:strongConn} that for large enough $p$,  $p\geq 1711$, $\mathcal{P}_p$ is non-metrizable. We remark that with a bit more work the bound of $5\sqrt{p} + 1$ in \Cref{lem: commonNeighbors} can be improved to $2\sqrt{p} + 2$. This implies, in turn, that $\mathcal{P}_p$ is non-metrizable for $p\geq 379$.
\section{Open Problems}
The notion of an irreducible path systems suggests many open questions and conjectures. Here are some of them:
\begin{conj}
Asymptotically almost every graph has an irreducible non-metrizable path system. Specifically, this holds with probability $1-o_n(1)$ for $G(n,1/2)$ graphs.
\end{conj}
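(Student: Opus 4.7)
My plan is to adapt the LP framework of the Paley construction, replacing Burgess's bound by the pseudo-randomness of $G=G(n,1/2)$. A naive choice of consistent path system for $G$---random length-two witnesses for every non-edge---is always metrizable (give every edge unit weight, and every length-two path ties for shortest), so a more careful construction is needed. I propose to seed a sparse random set of length-three witness paths: let $S$ be a small random collection of $4$-tuples $(a,b,c,d)$ with $ab,bc,cd\in E(G)$ and $ac,bd,ad\notin E(G)$, define $\mathcal{P}$ by committing $P_{a,d}=(a,b,c,d)$, $P_{a,c}=(a,b,c)$, $P_{b,d}=(b,c,d)$ for each tuple in $S$, and complete it by choosing uniformly random length-two witnesses for every remaining non-edge. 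Provided $S$ is sufficiently sparse, distinct tuples almost surely impose non-conflicting commitments and $\mathcal{P}$ is a consistent path system.

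For irreducibility I would union-bound over partitions $V=A\sqcup B$. Conditional on $G$ and $S$, the midpoint choices are independent, and for any non-edge $\{u,v\}\subset A$ Chernoff yields that the midpoint lies in $A$ with probability $(1+o(1))|A|/n$, uniformly whenever $\min(|A|,|B|)\to\infty$. For a balanced partition the probability of being a reduction is $\exp(-\Omega(n^2))$, easily dominating the $2^n$ choices of partition; for an unbalanced partition with $|A|=k$ the symmetric $B$-side bound $\exp(-\Omega(kn))$ absorbs the $\binom{n}{k}$ choices, and the singleton case $A=\{v\}$ reduces to ruling out that \emph{no} non-edge in $B$ selects $v$ as its midpoint.

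The main obstacle is non-metrizability: we must show a.a.s.\ infeasibility of the linear program consisting of the edge inequalities $w(uv)\le w(uc)+w(cv)$ for every common neighbor $c$, the length-two witness inequalities $w(u,c(u,v))+w(c(u,v),v)\le w(uc')+w(c'v)$, and the length-three witness inequalities $w(ab)+w(bc)+w(cd)\le w(ac')+w(c'd)$ for each $(a,b,c,d)\in S$. My plan is to emulate \Cref{lem:strongDirLP} by building a digraph on $E(G)$ in which each constraint contributes arcs from its ``left'' edges to its ``right'' edges, prove strong connectivity a.a.s.\ using the $n/8$-concentration of triple common neighborhoods, and then extract a Farkas-style infeasibility certificate from the length-three constraints (which are genuinely stronger than the ties produced by unit weights). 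The key technical difficulty is that these constraints compare sums to sums rather than having the single-variable form $a_m x_{i_m}\le x_{j_m}+x_{k_m}$ of \Cref{lem:strongDirLP}, so the clean maximum-principle argument there must be replaced by a more delicate averaging argument that exploits the randomness of both $G$ and $S$.
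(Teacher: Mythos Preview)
The statement you are attempting to prove is listed in the paper as a \emph{conjecture} in the Open Problems section; the paper offers no proof and does not claim one. So there is nothing to compare your attempt against---you are trying to settle an open problem, not to reproduce an argument.

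As a proof, your proposal is incomplete at the point you yourself flag. The irreducibility union bound is plausible and could likely be made rigorous with standard concentration for common neighbourhoods in $G(n,1/2)$. The real problem is non-metrizability. In the Paley construction the cyclic symmetry is doing essential work: averaging a putative metrizing weight $w$ over all rotations produces a weight $\tilde w$ that depends only on edge ``type'', and this collapses every length-$2$ constraint to the form $2x_a \le x_b + x_c$ with a \emph{single} variable on the left. That is precisely the hypothesis of \Cref{lem:strongDirLP}, and it is what makes the maximum-principle argument go through. In your random construction there is no symmetry to average over, so the constraints remain of the form $w(e_1)+w(e_2)\le w(e_3)+w(e_4)$ and $w(e_1)+w(e_2)+w(e_3)\le w(e_4)+w(e_5)$; \Cref{lem:strongDirLP} simply does not apply, and strong connectivity of the digraph you build no longer forces anything. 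Your suggestion to replace it by ``a more delicate averaging argument that exploits the randomness of both $G$ and $S$'' is not an argument---it is a restatement of the difficulty. Until you exhibit an explicit Farkas certificate (or some other infeasibility witness) that survives without the single-variable-on-the-left structure, the non-metrizability claim is unproven, and with it the conjecture.
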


Perhaps even more is true. Let $\Pi_G$ be the collection of all consistent path systems in $G$, and let $\mathcal{R}_G\subseteq \Pi_G$ be the collection of all those which are metrizable or reducible.
\begin{open}
Is it true that $|\mathcal{R}_G| = o_n(|\Pi_G|)$ for asymptotically almost every $n$-vertex graph?
\end{open}
If we wish to know whether a given path system is irreducible, we currently must resort to brute force searching. So we ask:
\begin{open}
Is there an efficient algorithm to determine if a given path system is irreducible?
\end{open}

A reduction $V = A\sqcup B$ as in Definition \ref{def1} still does not tell the whole story.

\begin{open}
Let $G=(V,E)$ be a graph, a bipartition $V=V_1\sqcup V_2$ of its vertex set, and path systems
$\mathcal{Q}_1, \mathcal{Q}_2$ on $G(V_1), G(V_2)$ respectively. Under what conditions is there is a consistent non-metrizable path system on $G$ whose restriction to $G_1, G_2$ coincides with $\mathcal{Q}_1, \mathcal{Q}_2$ respectively?
\end{open}

\newpage

\printbibliography
\end{document}